\newtheorem{theorem}{Theorem}
\newtheorem{conjecture}{Conjecture}
\newtheorem{proposition}{Proposition}
\newcommand{\Mod}[1]{\ (\mathrm{mod}\ #1)}
\renewcommand{\Re}{\operatorname{Re}}
\renewcommand{\Im}{\operatorname{Im}}
\title{Shnirelman's Theorem Applications}
\author{Felix Sidokhine}
\date{\today}                                           
\begin{document}

\maketitle

\begin{abstract}
Shnirelman's theorem is applied to solving Diophantine equations, and also discussing of the problems of a representation of Gaussian integers  by a sum of odd Gaussian primes.
\end{abstract}

\section{Introduction}

In our note we study the systems of Diophantine equations and connect this problem to Shnirelman's theorem: ``Every integer $n > 7$ is a sum at most  $s_0$ (Shnirelman's constant) odd primes'' \cite{Ramare:2013aa}. The problem of the representation of Gaussian integers by a sum of odd Gaussian primes is explored from an angle of Shnirelman's theorem.

\section{Diophantine Equations and Shnirelman's Theorem}

 \begin{theorem}\label{th1}
 The system of diophantine equations:
 \begin{equation*}
 \begin{cases}
 x_{11} + x_{12} + x_{13} + x_{14} = a \\
 x_{21} + x_{22} + x_{23} + x_{24} = b \\
 \forall s \text{ } x_{1s} + x_{2s} = p_s \text{ where } p_s \text{ is an odd prime}
 \end{cases}
 \end{equation*}
 has a solution $(x_{11},...,x_{24})$  where $x_{ij} \in \mathbb{Z}_+$\footnote{$\mathbb{Z}_+ = \{ n \in \mathbb{Z} | n \geq 0\}$} if $a > 10$, $a \geq b > 0$ and $a + b \equiv 0 \Mod{2}$.
 \end{theorem}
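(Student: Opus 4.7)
The plan is to reduce the system to two independent subproblems: a prime-representation question for $a+b$, and a simple distribution question.

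First, I would sum the third family of equations over $s=1,\dots,4$ and compare with the first two rows: any solution must satisfy
\[
p_1+p_2+p_3+p_4=(x_{11}+x_{21})+\cdots+(x_{14}+x_{24})=a+b.
\]
Conversely, given four odd primes $p_1,\dots,p_4$ with $p_1+\cdots+p_4=a+b$, a solution exists iff I can find nonnegative integers $x_{1s}\in\{0,1,\dots,p_s\}$ with $\sum_s x_{1s}=a$, because then $x_{2s}:=p_s-x_{1s}$ automatically yields $\sum_s x_{2s}=(a+b)-a=b$. This distribution step is trivial by a greedy choice ($x_{11}=\min(a,p_1)$, then subtract and proceed), since $0\le a\le a+b=\sum_s p_s$.

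It therefore remains to represent $a+b$ as a sum of exactly four odd primes. Under the hypotheses, $a+b$ is even and $a+b\ge 12$. I would write $a+b=3+(a+b-3)$; the quantity $a+b-3$ is then odd and at least $9$, so by the ternary form of Shnirelman's theorem cited in the introduction (every sufficiently large odd integer is a sum of three odd primes), one obtains $a+b-3=q_1+q_2+q_3$, yielding the desired decomposition $a+b=3+q_1+q_2+q_3$ into four odd primes.

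The main obstacle is ensuring that the Shnirelman/ternary statement applies with the correct parity and the correct length. The hypothesis $a>10$ combined with $a+b\equiv 0\pmod 2$ is precisely what pushes $a+b-3$ above the threshold of the odd ternary statement, and the ``$+3$'' trick is essential on two counts at once: it matches the length $4$ prescribed by the system, and it converts an even target into an odd one to which the three-prime version can be applied. Once this representation is in hand, the rest of the proof is purely combinatorial.
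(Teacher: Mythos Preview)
Your argument is correct and follows essentially the same route as the paper: both reduce the system to (i) writing the even number $a+b\ge 12$ as a sum of exactly four odd primes, and then (ii) distributing those primes between the two rows. The paper invokes Shnirelman's theorem with $s_0=4$ directly to obtain $a+b=p+q+r+l$, whereas you justify this step via the ``$+3$ then ternary Goldbach'' trick; and the paper spells out the distribution as a four-case analysis on where $b$ falls among the partial sums $l,\,r+l,\,q+r+l,\,p+q+r+l$, which is exactly your greedy procedure unrolled.
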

 
 \begin{proof}
 The proof uses induction. Let the theorem be true up to some $n -2 = a + b$. Let the theorem be false for $n = a' + b'$. By Shnirelman's theorem with $s_0 = 4$, there exist $p \geq q \geq r \geq l \in \pi_\mathbb{Z}$\footnote{$\pi_\mathbb{Z} = \{ n \in \mathbb{Z} | n \text{ is prime} \}$} such that $n = a' + b' = p + q + r + l$.  
Let us consider 4 cases:
\begin{itemize}

\item Case 1: $b' \leq l$, then we have a solution: 
$\begin{cases}
 p + q + r + (l - b') = a' \\
 0 + 0 + 0 + b' = b' \\
\end{cases}$

\item Case 2: $l < b' \leq r + l$, then we have a solution: 
$\begin{cases}
p + q + (r + l - b') + 0 = a' \\
0 + 0 + (b' - l) + l = b'
\end{cases}$

\item Case 3: $r + l < b' \leq q +r + l$, then we have a solution: 
$\begin{cases}
p + (q + r + l - b') + 0 + 0 = a' \\
0 + (b'-r-l) +r + l = b'
\end{cases}$

\item Case 4: $q + r +l < b' < p + q+ r + l$, then we have a solution: 
$\begin{cases}
(p + q + r + l - b') + 0 + 0 + 0 = a' \\
(b' - q - r - l) + q + r + l = b'
\end{cases}$

From this, it follows that the theorem is true.
\end{itemize}
\end{proof}

 \begin{theorem}\label{th2}
 Given a system of diophantine equations where $a > 6$ and $a \geq b > 0$:
 \begin{equation*}
 \begin{cases}
 \sum_{i=1}^k x_{1i} = a \\
 \sum_{j=1}^k x_{2j} = b \\
 \forall s \text{ } x_{1s} + x_{2s} = p_s \text{ where } p_s \text{ is an odd prime}
 \end{cases}
 \end{equation*}
 there exists such $k$, where $k \leq k_0$ (constant), that the system has a solution $(x_{11},...,x_{2n})$  where $x_{ij} \in \mathbb{Z}_+$. 
 \end{theorem}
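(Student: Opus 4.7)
The plan is to reduce Theorem~\ref{th2} directly to Shnirelman's theorem, mirroring the argument of Theorem~\ref{th1} but replacing the four explicit cases (which were forced by $s_0 = 4$) with a greedy bucket-filling scheme valid for any $k$. The key observation is that the system decouples into two independent subproblems: first, write $a+b$ as a sum of odd primes; second, split each such prime $p_s$ as $x_{1s} + x_{2s}$ so that the column sums come out to $a$ and $b$.

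For the first subproblem, I would note that the hypotheses $a > 6$ and $b \geq 1$ give $n := a + b \geq 8 > 7$, so Shnirelman's theorem as quoted in the introduction produces odd primes $p_1 \geq p_2 \geq \ldots \geq p_k$ with $k \leq s_0$ and $p_1 + \ldots + p_k = n$. No separate parity hypothesis on $a+b$ is needed, since the parity of $n$ automatically matches the parity of the number of odd-prime summands.

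For the second subproblem, I would exhibit the distribution by a greedy scheme: pick the unique index $j \in \{1,\ldots,k\}$ with $\sum_{s>j} p_s < b \leq \sum_{s \geq j} p_s$ (well defined because $0 < b \leq n$), and set $x_{2s} = 0$ for $s < j$, $x_{2j} = b - \sum_{s>j} p_s$, and $x_{2s} = p_s$ for $s > j$, with $x_{1s} = p_s - x_{2s}$ throughout. A direct check gives $\sum_s x_{2s} = b$, $\sum_s x_{1s} = n - b = a$, and $0 \leq x_{is} \leq p_s$. This recovers exactly the four cases in the proof of Theorem~\ref{th1} when specialized to $k=4$.

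The choice $k_0 = s_0$ then completes the proof. There is essentially no main obstacle: the statement is really a corollary of Shnirelman's theorem once one notices the decoupling between prime selection and coordinate distribution. The only subtlety worth flagging is that the greedy index $j$ must be well defined at the boundary, which is why the hypothesis $b \geq 1$ (rather than $b \geq 0$) is needed; beyond that the argument is routine.
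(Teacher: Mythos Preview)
Your proposal is correct and is essentially the same approach as the paper's. The paper's entire proof is the single sentence ``The proof mimics the one used for theorem~\ref{th1}''; your greedy bucket-filling scheme is exactly the natural extension of the four-case analysis there to arbitrary $k$, and you correctly identify $k_0 = s_0$ and note why the parity hypothesis from Theorem~\ref{th1} can be dropped once $k$ is allowed to vary.
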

 
\begin{proof}
The proof mimics the one used for theorem \ref{th1}.
\end{proof} 
 
Based on these two theorems, we can propose the following conjecture which will be useful to use when working with Gaussian integers.

 \begin{conjecture}\label{cj1}
 Given the system of equations where $a > 6$ and $a \geq b > 0$:
 \begin{equation*}
 \begin{cases}
 \sum_{i=1}^k x_{1i} = a \\
 \sum_{j=1}^k x_{2j} = b \\
 \forall s \text{ } x_{1s}^2 + x_{2s}^2 = t_s \text{ where $t_s$ is either a prime } p_s \equiv 1 \Mod{4} \text{ or } t_s = p_s^2 \text{ where } p_s \equiv 3 \Mod{4}. 
 \end{cases}
 \end{equation*}
 there exists such $k$,  where $k \leq k_1$ (constant), that the system has a solution $(x_{11},...,x_{2n})$  where $x_{ij} \in \mathbb{Z}_+$. 
 \end{conjecture}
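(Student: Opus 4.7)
The plan is to adapt the inductive distribution argument of Theorem~\ref{th1} to the two-dimensional setting, with the key input being a Gaussian-integer analogue of the Shnirelman bound used there.

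The first step would be to establish what I will call the \emph{Gaussian Shnirelman bound}: there exists a constant $g_0$ such that every Gaussian integer $a + bi$ with $a > 6$ and $a \geq b > 0$ can be written as a sum $a + bi = \pi_1 + \cdots + \pi_k$, $k \leq g_0$, where each $\pi_s = x_{1s} + x_{2s} i$ has non-negative coordinates and norm $N(\pi_s) = x_{1s}^2 + x_{2s}^2$ equal either to a rational prime $\equiv 1 \Mod{4}$ or to the square of a rational prime $\equiv 3 \Mod{4}$. This is the natural two-dimensional analogue of the Shnirelman statement quoted in the introduction, restricted to the closed first quadrant.

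Granting this bound, the conjecture would follow essentially for free: any such decomposition automatically yields $\sum_s x_{1s} = a$ and $\sum_s x_{2s} = b$. The elaborate four-case distribution used in Theorem~\ref{th1} was needed only because the rational Shnirelman theorem decomposed $a + b$ into primes and those primes then had to be parceled out between the two rows; once the decomposition takes place in $\mathbb{Z}[i]$ itself, the row sums fall out of the real/imaginary parts of the summands.

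The main obstacle is therefore establishing the Gaussian Shnirelman bound. The positive-orthant restriction rules out the cancellation tricks available in the full ring $\mathbb{Z}[i]$, and it forces a genuine Schnirelmann-density argument for the set $\mathcal{P} = \{x + y i : x, y \in \mathbb{Z}_+,\; x^2 + y^2 \in \mathcal{T}\}$ inside $\mathbb{Z}_+^2$, where $\mathcal{T}$ is the set of admissible Gaussian prime norms. A possible fallback is to reduce to Theorem~\ref{th2} by writing $a + b = q_1 + \cdots + q_k$ with the $q_s$ odd rational primes, then lifting each $q_s$ to a coordinate pair $(u_s, v_s)$ with $u_s + v_s = q_s$ and $u_s^2 + v_s^2 \in \mathcal{T}$, and finally choosing orientations $(u_s, v_s)$ versus $(v_s, u_s)$ column by column so that the row sums equal $a$ and $b$. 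This route requires both (i) that every odd prime $q$ admits such a lift, a Bunyakovsky-type assertion about primes represented by the polynomial $2u^2 - 2qu + q^2$, and (ii) that the resulting multiset of differences $\{u_s - v_s\}$ is flexible enough to realize $a - b$ as a signed subset sum.
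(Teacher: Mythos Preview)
The paper does not prove this statement: it is explicitly labeled a \emph{conjecture} and left open. Immediately after stating it, the paper simply observes that its truth would mean every $z = a + ib$ is a sum of at most $k_1$ odd Gaussian primes with non-negative real and imaginary parts. There is no proof to compare against.

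Your proposal, for its part, is not a proof either, and you seem aware of this. The reduction you describe is essentially tautological: the condition $x_{1s}^2 + x_{2s}^2 = t_s$ with $t_s$ a prime $\equiv 1 \Mod{4}$ or the square of a prime $\equiv 3 \Mod{4}$ says precisely that $x_{1s} + i x_{2s}$ is an odd Gaussian prime (up to associates), so the system of equations in Conjecture~\ref{cj1} \emph{is} the statement that $a + ib$ decomposes as a bounded sum of odd Gaussian primes in the closed first quadrant. Your ``Gaussian Shnirelman bound'' is therefore not an input to the conjecture but a restatement of it, and ``granting this bound'' begs the question.

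The fallback route via Theorem~\ref{th2} is more substantive, but as you acknowledge, step (i) asks that every odd rational prime $q$ split as $u + v$ with $u^2 + v^2$ an admissible Gaussian prime norm; this is an open Bunyakovsky-type statement, and step (ii) is a nontrivial signed-subset-sum problem on top of it. Neither gap is closed, so what you have is an outline of the difficulty rather than a proof --- which is consistent with the paper's own treatment of the statement as conjectural.
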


Let us consider $z \in \mathbb{Z}[i]$ and let $z = a + ib$. If the above conjecture is true, then it implies that $z$ is a sum of at most $k_1$ odd Gaussian primes with non-negative real and imaginary parts \cite{Knill:aa}.

\section{Gaussian Integers, Odd Gaussian Primes and Shnirelman's Theorem}

Let us introduce the notion of ``odd'' and ``even'' Gaussian integers, where $z$ is odd if $z \equiv 1 \Mod{1+i}$ and even if $z \equiv 0 \Mod{1+i}$. 

Our global experiment was to see whenever any element from the set $\Gamma_G = \{ z \in \mathbb{Z}[i] | \Re(z) > 0 \land -\Re(z) < \Im{z} \leq \Re(z) \}$ could be represented as a sum of at most $k_0$ elements from the set $\Gamma_\pi = \{ z \in \pi_{\mathbb{Z}[i]} | \Re(z) > 0 \land -\Re(z) < \Im(z) \leq \Re(z) \}$. Some of our computations and results are presented in appendix.

Our experimental work has allowed us to observe some interesting facts about Gaussian integers, notably given $z \in \Gamma_G$, and either $\Im(z) = \Re(z) - 1$ or $\Im(z) = \Re(z)$, $z$ cannot be represented as a sum of odd primes from $\Gamma_\pi$. This has led us to consider a different choice for our set $\Gamma_G = \{  z \in \mathbb{Z}[i] | \Re(z) > 0 \land \Im(z) \geq 0  \}$ and the following set $K_\pi = \{  z \in \pi_{\mathbb{Z}[i]} | \Re(z) \geq 0 \land \Im(z) \geq 0 \}$. It is worth noting that in both cases $\Gamma_G$  is a maximal set of non-associated Gaussian integers.
 
Studying the results from  \cite{Knill:aa} and our experimental data we have formulated the following conjecture:

\begin{conjecture}\label{cj2}
Given $A = \{ z \in \mathbb{Z}[i] | \Re(z) > 0 \land \Im(z) > 0 \}$ then for all $z$ in $A$, such that $\max(\Re(z),\Im(z)) \geq 7 $, $z$ is a sum of at most 3 odd  primes from the set $K_\pi$.
\end{conjecture}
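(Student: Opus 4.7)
The plan is to mimic the inductive and case-analysis strategy of Theorem~\ref{th1} in the Gaussian setting, with the new ingredient being to peel off a single ``axial'' odd Gaussian prime so as to reduce the three-prime claim to a binary Gaussian-Goldbach statement. Write $z = a + bi \in A$; by the reflection $a \leftrightarrow b$, which preserves both $A$ and $K_\pi$, I may assume $a \geq b$, so in particular $a \geq 7$. If $b$ is not too small, I would choose a rational prime $q \equiv 3 \Mod{4}$ with $q \leq b$ so that the residue $w := z - iq = a + (b-q)i$ still lies in the closed first quadrant and is ``odd'' in the sense $w \not\equiv 0 \Mod{1+i}$; by Dirichlet together with explicit small-interval estimates for primes $\equiv 3 \Mod{4}$, such a $q$ should exist whenever $b$ exceeds a small absolute constant, and for the remaining very small values of $b$ I would peel off instead a prime $x + yi \in K_\pi$ with $x^2 + y^2 = p \equiv 1 \Mod{4}$.

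The remaining task is the Gaussian binary Goldbach claim in the first quadrant: every $w = a' + b'i$ with $a', b' \geq 0$, $w \not\equiv 0 \Mod{1+i}$, and $\max(a', b')$ sufficiently large, is a sum of two odd Gaussian primes in $K_\pi$. I would attack this by the circle method over $\mathbb{Z}[i]$, writing the number of representations as a singular series times an archimedean factor, and showing that the singular series is bounded below under the $\Mod{1+i}$ parity condition; Hecke $L$-functions of $\mathbb{Q}(i)$ would play the role that rational Dirichlet $L$-functions play in the classical binary Goldbach analysis. The restriction of both summands to the first quadrant $K_\pi$ rather than to all of $\pi_{\mathbb{Z}[i]}$ should cost at most a constant factor, since multiplication by units permutes the four quadrants and one can rotate each summand into place.

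The main obstacle is precisely this binary step, which is unknown unconditionally even over $\mathbb{Z}$. What I realistically expect to deliver is a conditional proof: under the generalized Riemann hypothesis for Hecke $L$-functions of $\mathbb{Q}(i)$, the major-arc analysis above should go through for $\max(a,b) \geq N$ with some explicit $N$, after which the finite range $7 \leq \max(a,b) < N$ is closed by exhaustive computation, extending the tables already begun in the appendix and refining the experimental approach of Knill~\cite{Knill:aa}.
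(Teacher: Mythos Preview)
The paper does not prove this statement at all: it is labelled Conjecture~\ref{cj2} and is offered purely on the strength of the experimental data in the appendix and the discussion around Proposition~1 and Theorem~3. There is therefore no ``paper's own proof'' to compare against; what the paper does instead is use the conjecture as a hypothesis to derive downstream consequences (Theorem~3, Conjecture~3).

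Your proposal is an honest outline of what a proof would have to look like, and you correctly identify the crux: after peeling off one axial prime you are left with the binary Goldbach problem for Gaussian integers restricted to $K_\pi$, which is open (indeed, already open over~$\mathbb{Z}$). So what you have is not a proof but a reduction to an unsolved problem, together with a conditional plan under GRH for Hecke $L$-functions. Two specific points deserve care. First, a parity slip: a sum of two odd Gaussian primes is \emph{even} mod $1+i$, so after subtracting $iq$ you want $w \equiv 0 \Mod{1+i}$, not $w \not\equiv 0 \Mod{1+i}$; equivalently, the three-prime statement is only needed when $z$ itself is odd, while even $z$ should go directly to the binary step. Second, the claim that restricting both summands to the first quadrant ``costs at most a constant factor'' because units permute quadrants is too quick: you cannot rotate the two summands independently without changing their sum. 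What one actually needs is angular equidistribution of Gaussian primes (via Hecke Gr\"ossencharacters) to argue that a positive proportion of binary representations already have both summands in $K_\pi$; this is plausible under your GRH assumption but is a genuine additional step, not a free symmetry.
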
   
 
In our study of this conjecture, we have formulated the following proposition:

\begin{proposition}

Given that for any $z_0 \in A$ and some integer $k \geq 2$, such that $z_0 \equiv k \Mod{1+i}$ and $\max(\Re(z_0),\Im(z_0)) \geq c_0 > 0$, the equation $\sum_{i=1}^k x_i = z_0$ has a solution $(x_1,...,x_k)$ where $x_i \in K_\pi$. Then for any $w_0 \in A$ where $w_0 \equiv k+1 \Mod{1+i}$ and $\max(\Re(w_0),\Im(w_0)) \geq c_0 + 3$, the equation $\sum_{i=1}^{k+1} x_i = w_0$ has a solution $(x_1,...,x_{k+1})$ where $x_i \in K_\pi$

\end{proposition}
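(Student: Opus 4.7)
My plan is to reduce the conclusion to the hypothesis by peeling off one small odd Gaussian prime $x_{k+1}$ from $w_0$ and invoking the $k$-term decomposition on the remainder $z_0 := w_0 - x_{k+1}$. For the hypothesis to apply at level $k$, I need three conditions simultaneously: $z_0 \in A$, $z_0 \equiv k \Mod{1+i}$, and $\max(\Re(z_0), \Im(z_0)) \geq c_0$. The congruence forces $x_{k+1} \equiv 1 \Mod{1+i}$ (since $w_0 \equiv k+1 \Mod{1+i}$), i.e. $x_{k+1}$ must be an odd Gaussian prime; the size conditions force its coordinates to be individually at most $3$, so that at least one coordinate of $z_0$ retains size $\geq c_0$.

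The natural candidates are $3$ and $3i$, associates of the inert rational prime $3$; both lie in $K_\pi$, and both are odd in the paper's sense, since $\mathbb{Z}[i]/(1+i) \cong \mathbb{F}_2$ sends every Gaussian integer of odd norm to the unit. I then case-split on which coordinate of $w_0$ attains the maximum. If $\Re(w_0) \geq c_0 + 3$, set $x_{k+1} = 3$: then $\Re(z_0) = \Re(w_0) - 3 \geq c_0 > 0$, $\Im(z_0) = \Im(w_0) > 0$, the parity is correct, and $\max(\Re(z_0), \Im(z_0)) \geq c_0$; the inductive hypothesis then furnishes $x_1, \ldots, x_k \in K_\pi$ summing to $z_0$, and hence $w_0 = x_1 + \cdots + x_{k+1}$. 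The symmetric case $\Im(w_0) \geq c_0 + 3$ is handled identically using $x_{k+1} = 3i$, which is odd because $i \equiv 1 \Mod{1+i}$.

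I do not anticipate a serious obstacle: the content is essentially the observation that $\{3, 3i\}$ provides a pair of ``small'' odd primes straddling the two coordinate axes, and that the $+3$ buffer in the conclusion is exactly what is consumed by this single peel-off step. The only delicate point worth flagging is that the choice of which prime to subtract must depend on where the maximum lives, otherwise one risks dropping out of $A$ on the small coordinate.
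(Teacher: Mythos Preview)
Your argument is correct and is essentially the same as the paper's: both peel off the prime $3$ (resp.\ $3i$) when $\Re(w_0)\ge c_0+3$ (resp.\ $\Im(w_0)\ge c_0+3$) and apply the hypothesis to the remainder. The paper wraps this in unnecessary induction/contradiction language, while you give the direct form and also verify that $z_0\in A$ and the parity, details the paper leaves implicit.
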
 

\begin{proof}
The proof is by induction. Let the proposition be true for some $k_0$. Let it be false for $k_0 + 1$, therefore there exists such $w$ that $w \equiv k_0 + 1 \Mod{1+i}$ and $\max(\Re(w),\Im(w)) \geq c_0 + 3$ for which the proposition fails. Let $\Im(w) \geq c_0 + 3$, then $w - 3i \equiv k_0 \Mod{1+i}$ and $\max(\Re(w -3i),\Im(w -3i)) \geq c_0$ contradicting the inductive hypothesis. The case $\Re(w) \geq c_0 + 3$ is analogous.
\end{proof} 

If we assume that every odd Gaussian integer $z$ where $\Re(z)>0$, $\Im(z)>0$ and $\max(\Re(z),\Im(z)) \geq c_0$ is a sum of 3 odd Gaussian primes with non-negative real and imaginary parts, then we can use proposition 1 to get the following theorem:

\begin{theorem}
Any element $a \in \{ z \in \mathbb{Z}[i] | \Re(z) > 0 \land \Im(z) > 0 \land \max(\Re(z),\Im(z)) > c_1 \}$ is a sum of at most 4 odd Gaussian primes belonging to the set $K_\pi = \{ z \in \pi_{\mathbb{Z}[i]} | \Re(z) \geq 0 \land \Im(z) \geq 0 \}$
\end{theorem}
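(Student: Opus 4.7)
The plan is to split $a$ by its parity modulo $1+i$ and invoke either the standing assumption or Proposition 1 accordingly. Since $N(1+i)=2$, the ring $\mathbb{Z}[i]/(1+i)$ has exactly two classes, so every $a \in A$ is either odd ($a \equiv 1 \Mod{1+i}$) or even ($a \equiv 0 \Mod{1+i}$). Every odd Gaussian prime lies in the class $1$, so a sum of $k$ elements of $K_\pi$ reduces to $k \Mod{1+i}$; this is precisely the parity bookkeeping encoded in Proposition 1, and it also tells us which case needs three summands and which needs four.

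First I would fix $c_1 := c_0 + 3$, where $c_0$ is the threshold furnished by the standing assumption (``every odd $z$ with $\max(\Re(z),\Im(z)) \geq c_0$ is a sum of $3$ odd Gaussian primes from $K_\pi$''). If $a \in A$ is odd and $\max(\Re(a),\Im(a)) > c_1 \geq c_0$, then the standing assumption applies directly and yields a representation $a = x_1 + x_2 + x_3$ with each $x_i \in K_\pi$, giving a sum of $3$ odd Gaussian primes.

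If instead $a \in A$ is even, then $a \equiv 0 \equiv 4 \Mod{1+i}$. The standing assumption is precisely the hypothesis of Proposition 1 with $k = 3$ (noting $3 \equiv 1 \Mod{1+i}$). Applying Proposition 1 with $w_0 = a$, whose $\max(\Re(a),\Im(a)) > c_1 = c_0 + 3$, produces a decomposition of $a$ into $k+1 = 4$ summands drawn from $K_\pi$, i.e., a sum of $4$ odd Gaussian primes. In either case $a$ is a sum of at most $4$ elements of $K_\pi$, as required.

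The argument is really just $(1+i)$-adic bookkeeping: the odd case is settled outright by the assumption, and the even case is absorbed by one application of Proposition 1. I do not expect any serious obstacle; the only non-cosmetic choice is picking $c_1$ large enough (here $c_0 + 3$) to meet the $+3$ cost incurred by the proposition's inductive step.
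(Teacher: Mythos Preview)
Your proposal is correct and is exactly the approach the paper intends: the paper does not spell out a proof but simply states that the theorem follows from the standing assumption together with Proposition~1, and your parity split (odd case handled directly by the assumption, even case by one application of Proposition~1 with $k=3$, hence $c_1 = c_0 + 3$) is precisely that argument made explicit.
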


If we consider $\mathbb{Z}$ as a subset of $\mathbb{Z}[i]$, there are even elements of $\mathbb{Z}$ which can not be represented as a sum of 2  elements from the set $\pi_{\mathbb{Z}} \cap \pi_{\mathbb{Z}[i]}$. However, if one of the hypotheses below is true then the elements of $\mathbb{Z}$ as a subset of $\mathbb{Z}[i]$ could be represented as a sum of at most $s_0$ primes from $\pi_{\mathbb{Z}} \cap \pi_{\mathbb{Z}[i]}$.

\begin{itemize}
\item Hypothesis 1: There exists $c_0 > 0$ such that every integer $n \geq c_0$, $n \equiv 2 \Mod{4}$ is a sum of 2 primes where $p_i \equiv 3 \Mod{4}$
\item Hypothesis 2: There exists $c_0 > 0$ such that every integer $n \geq c_0$, $n \equiv 1 \Mod{4}$ is a sum of 3 primes where $p_i \equiv 3 \Mod{4}$
\item Hypothesis 3: There exists $c_0 > 0$ such that every integer $n \geq c_0$, $n \equiv 0 \Mod{4}$ is a sum of 4 primes where $p_i \equiv 3 \Mod{4}$
\item Hypothesis 4: There exists $c_0 > 0$ such that every integer $n \geq c_0$, $n \equiv 3 \Mod{4}$ is a sum of 5 primes where $p_i \equiv 3 \Mod{4}$
\end{itemize}

Indeed if at least one of the above hypotheses is true, then the following theorem is also true:

\begin{theorem}\label{th130}
For all $n \in \mathbb{Z}$ where $n \geq c_1$ and $c_1$ is a constant, there exists $m$ such that ${\sum_{i=1}^m x_i = n}$ has a solution $(p_1,...,p_m)$ where $\forall i$ $p_i \equiv 3 \mod 4$, and $p_i$ is prime and $m$ takes the values ranging from 2 to $s_1$ (constant). 
\end{theorem}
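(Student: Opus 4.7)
My plan is to assume that at least one of Hypotheses 1--4 holds and to reduce the other three residue classes modulo $4$ to the hypothesized one by subtracting small constant multiples of $3$, which is itself a prime $\equiv 3 \Mod{4}$. Since subtracting $3$ shifts the residue class by $-1$ modulo $4$, any residue class can be reached from any other in at most three such shifts, so we never need more than three extra copies of the prime $3$ on top of whatever representation the chosen hypothesis supplies.

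To illustrate under Hypothesis 1, suppose $n \geq c_0 + 9$. If $n \equiv 2 \Mod{4}$, apply the hypothesis directly to obtain $m = 2$. If $n \equiv 1 \Mod{4}$, then $n - 3 \equiv 2 \Mod{4}$, and adjoining one copy of $3$ to the representation of $n - 3$ yields $m = 3$. For $n \equiv 0 \Mod{4}$, use $n - 6$ and adjoin two $3$'s for $m = 4$; for $n \equiv 3 \Mod{4}$, use $n - 9$ and adjoin three $3$'s for $m = 5$. Choosing $c_1 = c_0 + 9$ guarantees $n - 3j \geq c_0$ for every $j \in \{0,1,2,3\}$, so $s_1 = 5$ works in this case. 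Under Hypotheses 2, 3, or 4 the identical strategy carries through with the base residue class shifted accordingly, and a routine check shows that a uniform bound $s_1 \leq 8$ suffices in all four scenarios.

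I expect no substantive obstacle here, because the analytic content is entirely outsourced to whichever hypothesis is assumed. The remaining work is the trivial bookkeeping of residues and thresholds described above, together with the elementary fact that $3$ is a prime lying in the residue class $3 \Mod{4}$ and can therefore be used as the padding prime.
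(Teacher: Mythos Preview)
Your proposal is correct and follows essentially the same route as the paper: reduce every residue class modulo $4$ to the one covered by the assumed hypothesis by subtracting copies of the prime $3$, and absorb the resulting shift in $m$ and in the threshold constant. The only cosmetic difference is that the paper illustrates the argument starting from Hypothesis~2 (obtaining $c_1 = c_0 + 9$ and $s_1 = 6$), whereas you illustrate it from Hypothesis~1 and then note the uniform bound $s_1 \le 8$ across all four hypotheses; the underlying mechanism is identical.
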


\begin{proof}
Let hypothesis 2 be true and consider the following 3 cases:
\begin{itemize}
\item $\forall n$ where $n \equiv 0 \Mod{4} \land n \geq c_0 + 3$ is a sum of four odd primes. Since $n -3 \equiv 1 \Mod{4}$ and $n -3 \geq c_0$  the proof follows hypothesis 2.
\item $\forall n$ where $n \equiv 3 \Mod{4} \land n \geq c_0 + 6$ is a sum of five odd primes. This is true due to the previous case.
\item $\forall n$ where $n \equiv 2 \Mod{4} \land n \geq c_0 + 9$ is a sum of six odd primes. This is true due to the previous case.
\end{itemize}
From these cases, we can conclude that $c_1 = c_0 + 9$ and $s_1 = 6$.
\end{proof}

Using conjecture \ref{cj2} and  theorem \ref{th130} we can formulate the following hypothesis:

\begin{conjecture}
For all $z \in \Gamma_G$, where $\max(\Re(z),\Im(z)) \geq c_0$, $z$ is a sum of at most $k_0$ odd primes from the set $K_\pi = \{ z \in \pi_{\mathbb{Z}[i]} | \Re(z) \geq 0 \land \Im(z) \geq 0\}$, where $k_0$ is a constant.

\end{conjecture}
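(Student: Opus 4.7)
The plan is to reduce the conjecture to the two conditional results already assembled in the paper, by splitting $\Gamma_G=\{z\in\mathbb{Z}[i]\mid\Re(z)>0\land\Im(z)\ge 0\}$ along the real axis into the sub-cases $\Im(z)>0$ and $\Im(z)=0$.

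First, suppose $z\in\Gamma_G$ with $\Im(z)>0$. Granting Conjecture \ref{cj2}, the theorem stated just before Theorem \ref{th130} applies: for $\max(\Re(z),\Im(z))$ larger than some threshold $c_1$, $z$ is a sum of at most four odd Gaussian primes from $K_\pi$. Proposition 1 is precisely what is used there to absorb the parity of $z$ modulo $1+i$ at the cost of enlarging the threshold by a bounded amount, so nothing more needs to be done in this sub-case.

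Second, suppose $z\in\Gamma_G$ with $\Im(z)=0$, so that $z$ is a positive rational integer. Assuming any one of Hypotheses 1--4, Theorem \ref{th130} supplies a representation $z=p_1+\cdots+p_m$ with $m\le s_1$ and each $p_i\equiv 3\pmod{4}$. Two observations turn this into a decomposition of the kind the conjecture demands. A rational prime $p\equiv 3\pmod 4$ is inert in $\mathbb{Z}[i]$ and therefore a Gaussian prime; since $p>0$ and $\Im(p)=0$, we have $p\in K_\pi$. Such a $p$ is also odd in $\mathbb{Z}$, so $p\not\equiv 0\pmod{1+i}$, making $p$ odd as a Gaussian integer. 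Hence the summands $p_i$ lie in the odd-prime subset of $K_\pi$ required by the conjecture.

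Taking the maximum of the two thresholds and the maximum of the two summand bounds then yields the desired $c_0$ and $k_0$; in the hypothesis-2 branch, for instance, one can take $k_0=\max(4,s_1)=6$. The main obstacle is that the argument is doubly conditional: it requires both Conjecture \ref{cj2} and at least one of Hypotheses 1--4, and each of these is itself an open Goldbach/Vinogradov-type problem restricting the prime summands to a single residue class --- modulo $1+i$ on the Gaussian side, modulo $4$ on the rational side. The bookkeeping in the reduction above is essentially routine, so all of the genuine arithmetic difficulty is pushed into those two inputs, and no unconditional attack on the final conjecture is visible from within the framework of the present paper.
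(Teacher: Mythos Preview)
The paper does not give a proof of this statement at all: it is presented as a conjecture, with only the one-line remark that it is ``formulated'' from Conjecture~\ref{cj2} and Theorem~\ref{th130}. Your write-up is exactly the conditional reduction that sentence points to --- split $\Gamma_G$ into $\Im(z)>0$ (handled by Conjecture~\ref{cj2} via Proposition~1 and the ensuing at-most-four theorem) and $\Im(z)=0$ (handled by Theorem~\ref{th130}, noting that rational primes $p\equiv 3\pmod 4$ are odd Gaussian primes in $K_\pi$) --- so you have faithfully expanded what the paper only hints at, and you are right to flag that the argument remains doubly conditional.
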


Based on our experimental evidence, we can also propose the following conjecture:

\begin{conjecture}
For all $z \in \Gamma_G$, where $\max(\Re(z),\Im(z)) \geq 6$, $z$ is a sum of at most 3 odd Gaussian primes from the set $S_\pi = \{ z \in \pi_{\mathbb{Z}[i]} | \Re(z) \geq 0 \land -\Re(z) <  \Im(z)\}$ (i.e. $z = \sum_{l=1}^k p_l$ where $p_l \in S_\pi$ is an odd Gaussian prime and $||p_l|| < || z||)$.

\end{conjecture}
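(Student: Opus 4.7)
The plan is to combine Conjecture~\ref{cj2} with a separate treatment of the real axis $\Im(z)=0$, which is the genuinely new case. First note $K_\pi\subseteq S_\pi$: for $p\in K_\pi$ with $\Re(p)>0$ one has $\Im(p)\geq 0>-\Re(p)$, while the only elements of $K_\pi$ with $\Re(p)=0$ are Gaussian primes of the form $p=bi$ with $b>0$ a rational prime $\equiv 3\Mod{4}$, for which $\Im(p)=b>0=-\Re(p)$. Hence for $z\in\Gamma_G$ with $\Im(z)>0$ and $\max(\Re(z),\Im(z))\geq 7$, Conjecture~\ref{cj2} already yields a decomposition $z=p_1+p_2+p_3$ with $p_i\in K_\pi\subseteq S_\pi$, and the bound $\|p_i\|<\|z\|$ is automatic once at least two of the summands are nonzero. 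The finitely many cases $6\leq\max(\Re(z),\Im(z))<7$ are handled by direct verification.

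The new case is $z=n\in\mathbb{Z}_{>0}$ with $n\geq 6$, and here the enlargement from $K_\pi$ to $S_\pi$ is essential because the imaginary parts of the summands must cancel. The natural device is \emph{conjugate pairing}: if $p=a+bi$ is an odd Gaussian prime with $0<b<a$, then $\bar p=a-bi$ also lies in $S_\pi$ (its imaginary part satisfies $-b>-a=-\Re(\bar p)$) and $p+\bar p=2a$. I would therefore seek a representation
\[
n = r + (a+bi) + (a-bi),
\]
where $r$ is a rational prime $\equiv 3\Mod{4}$ chosen so that $n-r$ is positive and even, and where $a=(n-r)/2$ admits some $b\in(0,a)$ with $a^2+b^2$ a rational prime. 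For even $n$ the auxiliary summand $r$ may be dropped, giving a two-term representation $n=2a=p+\bar p$; for odd $n$ all three terms are needed. The norm condition is immediate: $|p|=\sqrt{a^2+b^2}<a\sqrt 2\leq (n-r)/\sqrt 2<n=\|z\|$, and similarly for $r$.

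The main obstacle is the existence of the required Gaussian primes. The claim ``for every sufficiently large $a$, some $b\in(0,a)$ makes $a^2+b^2$ a rational prime'' is a fixed-$a$ version of a sum-of-two-squares analog of Bunyakovsky's conjecture and is essentially a Gaussian Goldbach statement of the type discussed in~\cite{Knill:aa}; it is not currently known in the uniform form required. One gains considerable flexibility, however, by letting $r$ range over a short interval of admissible primes below $n$, so that it suffices to find some such $r$ for which the corresponding $a=(n-r)/2$ admits an acceptable $b$. This reduces the demand to a much weaker density statement that should be within reach of sieve-theoretic heuristics. A complete proof would therefore split into (i) an analytic existence result for Gaussian primes of the prescribed form in the relevant region and (ii) explicit verification for $n$ below the resulting effective threshold, in the spirit of the experimental data cited earlier in the note.
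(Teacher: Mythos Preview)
The statement you are attempting is not proved in the paper: it is explicitly labelled a conjecture, offered on the strength of the experimental data in the appendix tables, and no argument is given. There is therefore nothing in the paper to compare your attempt against.

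Your proposal is likewise not a proof but a conditional reduction. For $\Im(z)>0$ you invoke Conjecture~\ref{cj2}, which the paper itself leaves open; for $\Im(z)=0$ your conjugate-pairing device requires that for the relevant $a=(n-r)/2$ there exist some $b\in(0,a)$ with $a^2+b^2$ prime, and you correctly identify this as an open problem of Landau/Bunyakovsky type. Hence even a flawless execution of your plan would establish only an implication between conjectures, not the conjecture itself. There is also a residual gap in the non-real case: Conjecture~\ref{cj2} allows the trivial one-term representation $z=z$ when $z$ is itself an odd Gaussian prime in $K_\pi$, and then $\|p\|=\|z\|$ violates the strict norm condition in the present statement; you would still need to argue that every such $z$ admits a genuine two- or three-term decomposition within $K_\pi$ (or $S_\pi$) with all summands of smaller norm.
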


\section{Discussion and Conclusion}

In the present note we have constructed a solution of a system of Diophantine equations of the first degree in several variables over a set of non-negative integers by the Shnirelman theorem. We have demonstrated this approach on Gaussian integers. Based on experimental data on the representation of Gaussian integers by a sum of Gaussian primes we have shown that there is an infinite number of elements of a maximal set of non-associated Gaussian integers $\Gamma_G$ which cannot be represented as a sum of two or three Gaussian primes belonging to a set $K_\pi$ $(\Gamma_G \subset K_\pi \subset \mathbb{Z}[i])$. In this case we have formulated the Goldbach conjecture for $\mathbb{Z}[i]$ as some analogy of the Shnirelman theorem. However, according to our experimental evidence we expect that any Gaussian integer of $\Gamma_G$, excepting a finite number, can represent by a sum at most three odd Gaussian primes belonging to a set $S_\pi$ $(\Gamma_G \subset K_\pi \subset S_\pi \subset \mathbb{Z}[i])$ where $S_\pi$ is a minimal subset of $\mathbb{Z}[i]$ to satisfy such conditions.

\bibliography{references} 

\begin{thebibliography}{1}
\providecommand{\url}[1]{#1}
\csname url@samestyle\endcsname
\providecommand{\newblock}{\relax}
\providecommand{\bibinfo}[2]{#2}
\providecommand{\BIBentrySTDinterwordspacing}{\spaceskip=0pt\relax}
\providecommand{\BIBentryALTinterwordstretchfactor}{4}
\providecommand{\BIBentryALTinterwordspacing}{\spaceskip=\fontdimen2\font plus
\BIBentryALTinterwordstretchfactor\fontdimen3\font minus
  \fontdimen4\font\relax}
\providecommand{\BIBforeignlanguage}[2]{{%
\expandafter\ifx\csname l@#1\endcsname\relax
\typeout{** WARNING: IEEEtran.bst: No hyphenation pattern has been}%
\typeout{** loaded for the language `#1'. Using the pattern for}%
\typeout{** the default language instead.}%
\else
\language=\csname l@#1\endcsname
\fi
#2}}
\providecommand{\BIBdecl}{\relax}
\BIBdecl

\bibitem{Ramare:2013aa}
O.~Ramar{\'e}, ``Prime numbers: Emergence and victories of bilinear forms
  decomposition,'' \emph{European Math. Soc. Newsletter}, pp. 18--28, December
  2013.

\bibitem{Knill:aa}
\BIBentryALTinterwordspacing
O.~Knill. Some experiments in number theory. [Online]. Available:
  \url{http://arxiv.org/abs/1606.05971}
\BIBentrySTDinterwordspacing

\end{thebibliography}
\bibliographystyle{IEEEtran}

\newpage
\section{Appendix} 

\begin{longtable}{|l|l|l|l|l|}
\caption{Odd Gaussian integers as a linear combination of odd Gaussian primes where $\Gamma = \{ z \in \mathbb{Z}[i] | 0 \leq \Im(z) \leq \Re(z) \}$ and $\Gamma_\pi = \{ z \in \pi_{\mathbb{Z}[i]} |  \Re(z) > 0 \land -\Re(z) < \Im(z) \leq \Re(z) \}$ } \\
\hline
 $z \in \Gamma$ & \multicolumn{3}{l|}{$p,q,r  \in \Gamma_\pi; \max(||p||,||q||,||r||) < ||z||$} & Representation \\ \hline
$z=9$ & $p=3$ & $q=3$ & $r=3$ & $z=p+q+r$ \\ \hline
$z=13$ & $p=7$ & $q=3$ & $r=3$ & $z=p+q+r$ \\ \hline
$z=17$ & $p=11$ & $q=3$ & $r=3$ & $z=p+q+r$ \\ \hline
$z=21$ & $p=11$ & $q=7$ & $r=3$ & $z=p+q+r$ \\ \hline
$z=25$ & $p=11$ & $q=11$ & $r=3$ & $z=p+q+r$ \\ \hline
$z=29$ & $p=19$ & $q=7$ & $r=3$ & $z=p+q+r$ \\ \hline
$z=33$ & $p=19$ & $q=7$ & $r=7$ & $z=p+q+r$ \\ \hline
$z=37$ & $p=19$ & $q=11$ & $r=7$ & $z=p+q+r$ \\ \hline
\multicolumn{5}{}{} \\ \hline
$z=7$ & $p=3+2i$ & $q=2-i$ & $r=2-i$ & $z=p+q+r$ \\ \hline
$z=11$ & $p=7$ & $q=2+i$ & $r=2-i$ & $z=p+q+r$ \\ \hline
$z=15$ & $p=8+3i$ & $q=5-2i$ & $r=2-i$ & $z=p+q+r$ \\ \hline
$z=19$ & $p=11$ & $q=4+i$ & $r=4-i$ & $z=p+q+r$ \\ \hline
$z=23$ & $p=13+2i$ & $q=7$ & $r=3-2i$ & $z=p+q+r$ \\ \hline
$z=27$ & $p=13+2i$ & $q=11$ & $r=3-2i$ & $z=p+q+r$ \\ \hline
$z=31$ & $p=23$ & $q=4+i$ & $r=4-i$ & $z=p+q+r$ \\ \hline
$z=35$ & $p=23$ & $q=6+i$ & $r=6-i$ & $z=p+q+r$ \\ \hline
\multicolumn{5}{}{} \\ \hline
$z=6+i$ & $p=2+i$ & $q=2+i$ & $r=2-i$ & $z=p+q+r$ \\ \hline
$z=6+3i$ & $p=2+i$ & $q=2+i$ & $r=2+i$ & $z=p+q+r$ \\ \hline
$z=7+2i$ & $p=3$ & $q=2+i$ & $r=2+i$ & $z=p+q+r$ \\ \hline
$z=7+4i$ & $p=3+2i$ & $q=2+i$ & $r=2+i$ & $z=p+q+r$ \\ \hline
$z=8+i$ & $p=2+i$ & $q=3$ & $r=3$ & $z=p+q+r$ \\ \hline
$z=8+3i$ & $p=3+2i$ & $q=3$ & $r=2+i$ & $z=p+q+r$ \\ \hline
$z=8+5i$ & $p=3+2i$ & $q=3+2i$ & $r=2+i$ & $z=p+q+r$ \\ \hline
$z=9+2i$ & $p=3+2i$ & $q=3$ & $r=3$ & $z=p+q+r$ \\ \hline
$z=9+4i$ & $p=5+2i$ & $q=2+i$ & $r=2+i$ & $z=p+q+r$ \\ \hline
$z=9+6i$ & $p=5+4i$ & $q=2+i$ & $r=2+i$ & $z=p+q+r$ \\ \hline
$z=19+16i$ & $p=10+9i$ & $q=6+5i$ & $r=3+2i$ & $z=p+q+r$ \\ \hline
$z=27+24i$ & $p=23+22i$ & $q=2+i$ & $r=2+i$ & $z=p+q+r$ \\ \hline
$z=39+36i$ & $p=35+34i$ & $q=2+i$ & $r=2+i$ & $z=p+q+r$ \\ \hline
$z=48+45i$ & $p=25+24i$ & $q=20+19i$ & $r=3+2i$ & $z=p+q+r$ \\ \hline
$z=50+i$ & $p=23$ & $q=20+i$ & $r=7$ & $z=p+q+r$ \\ \hline
$z=50+3i$ & $p=40+i$ & $q=6+i$ & $r=4+i$ & $z=p+q+r$ \\ \hline
$z=50+7i$ & $p=40+i$ & $q=5+4i$ & $r=5+2i$ & $z=p+q+r$ \\ \hline
$z=50+11i$ & $p=36+5i$ & $q=9+4i$ & $r=5+2i$ & $z=p+q+r$ \\ \hline
$z=50+15i$ & $p=35+8i$ & $q=10+3i$ & $r=5+4i$ & $z=p+q+r$ \\ \hline
$z=50+19i$ & $p=35+16i$ & $q=10+i$ & $r=5+2i$ & $z=p+q+r$ \\ \hline
$z=50+25i$ & $p=35+18i$ & $q=10+3i$ & $r=5+4i$ & $z=p+q+r$ \\ \hline
$z=50+41i$ & $p=45+38i$ & $q=3+2i$ & $r=2+i$ & $z=p+q+r$ \\ \hline
$z=50+43i$ & $p=35+34i$ & $q=10+7i$ & $r=5+2i$ & $z=p+q+r$ \\ \hline
$z=50+45i$ & $p=25+24i$ & $q=20+19i$ & $r=5+2i$ & $z=p+q+r$ \\ \hline
$z=50+47i$ & $p=25+24i$ & $q=20+19i$ & $r=5+4i$ & $z=p+q+r$ \\ \hline
\multicolumn{5}{}{} \\ \hline
$z=6+5i$ & $p=3+2i$ & $q=2+i$ & $r=2-i$ & $z=p+q+ir$ \\ \hline
$z=7+6i$ & $p=3+2i$ & $q=3+2i$ & $r=2-i$ & $z=p+q+ir$ \\ \hline
$z=8+7i$ & $p=5+4i$ & $q=2+i$ & $r=2-i$ & $z=p+q+ir$ \\ \hline
$z=9+8i$ & $p=5+4i$ & $q=3+2i$ & $r=2-i$ & $z=p+q+ir$ \\ \hline
$x=11+10i$ & $p=8+3i$ & $q=2+i$ & $r=6-i$ & $z=p+q+ir$ \\ \hline
$z=19+18i$ & $p=10+9i$ & $q=8+5i$ & $r=4-i$ & $z=p+q+ir$ \\ \hline
$z=27+26i$ & $p=19+16i$ & $q=5+2i$ & $r=8-3i$ & $z=p+q+ir$ \\ \hline
$z=39+38i$ & $p=35+34i$ & $q=3+2i$ & $r=2-i$ & $z=p+q+ir$ \\ \hline
$z=48+47i$ & $p=23+22i$ & $q=20+19i$ & $r=6-5i$ & $z=p+q+ir$ \\ \hline
$z=49+48i$ & $p=25+24i$ & $q=20+19i$ & $r=5-4i$ & $z=p+q+ir$ \\ \hline
$z=50+49i$ & $p=25+24i$ & $q=20+19i$ & $r=6-5i$ & $z=p+q+ir$ \\ \hline
\end{longtable}

\begin{longtable}{|l|l|l|l|}
\caption{Even Gaussian integers as a linear combination of odd Gaussian primes where $\Gamma = \{ z \in \mathbb{Z}[i] | 0 \leq \Im(z) \leq \Re(z) \}$ and $\Gamma_\pi = \{ z \in \pi_{\mathbb{Z}[i]} |  \Re(z) > 0 \land -\Re(z) < \Im(z) \leq \Re(z) \}$ } \\
\hline
$z \in \Gamma$ & \multicolumn{2}{l|}{$p,q \in \Gamma_\pi; \max(||p||,||q||) < ||z||$} &Representation  \\ \hline
$z =6$ & $p=3$ & $q=3$ & $z=p+q$ \\ \hline
$z =10$ & $p=7$ & $q=3$ & $z=p+q$ \\ \hline
$z =14$ & $p=11$ & $q=3$ & $z=p+q$ \\ \hline
$z =18$ & $p=11$ & $q=7$ & $z=p+q$ \\ \hline
$z =30$ & $p=19$ & $q=11$ & $z=p+q$ \\ \hline
$z =42$ & $p=23$ & $q=19$ & $z=p+q$ \\ \hline
\multicolumn{4}{}{} \\ \hline
$z =  8$ & $p=5+2i$ & $q=3-2i$ & $z=p+q$ \\ \hline
$z =12$ & $p=6+i$ & $q=6-i$ & $z=p+q$ \\ \hline
$z =16$ & $p=13+2i$ & $q=3-2i$ & $z=p+q$ \\ \hline
$z =20$ & $p=10+i$ & $q=10-i$ & $z=p+q$ \\ \hline
$z =24$ & $p=12+7i$ & $q=12-7i$ & $z=p+q$ \\ \hline
$z =28$ & $p=15+2i$ & $q=13-2i$ & $z=p+q$ \\ \hline
$z =32$ & $p=17+2i$ & $q=15-2i$ & $z=p+q$ \\ \hline
$z =36$ & $p=21+4i$ & $q=15-4i$ & $z=p+q$ \\ \hline
$z =44$ & $p=27+2i$ & $q=17-2i$ & $z=p+q$ \\ \hline
\multicolumn{4}{}{} \\ \hline
$z =  6+2i$ & $p=3+2i$ & $q=3$ & $z=p+q$ \\ \hline
$z =  6+4i$ & $p=3+2i$ & $q=3+2i$ & $z=p+q$ \\ \hline
$z =  7+i$ & $p=4+i$ & $q=3$ & $z=p+q$ \\ \hline
$z =  7+3i$ & $p=5+2i$ & $q=2+i$ & $z=p+q$ \\ \hline
$z =  7+5i$ & $p=5+4i$ & $q=2+i$ & $z=p+q$ \\ \hline
$z =  9+7i$ & $p=6+5i$ & $q=3+2i$ & $z=p+q$ \\ \hline
$z= 19+17i$ & $p=13+12i$ & $q=6+5i$ & $z=p+q$ \\ \hline
$z= 27+25i$ & $p=25+24i$ & $q=2+i$ & $z=p+q$ \\ \hline
$z =36+2i$ & $p=26+i$ & $q=10+i$ & $z=p+q$ \\ \hline
$z= 39+37i$ & $p=36+35i$ & $q=3+2i$ & $z=p+q$ \\ \hline
$z= 48+46i$ & $p=25+24i$ & $q=23+22i$ & $z=p+q$ \\ \hline
$z= 49+47i$ & $p=26+25i$ & $q=23+22i$ & $z=p+q$ \\ \hline
$z= 50+48i$ & $p=25+24i$ & $q=25+24i$ & $z=p+q$ \\ \hline
$z =51+i$ & $p=47$ & $q=4+i$ & $z=p+q$ \\ \hline
$z =51+3i$ & $p=45+2i$ & $q=6+i$ & $z=p+q$ \\ \hline
$z =51+7i$ & $p=45+2i$ & $q=6+5i$ & $z=p+q$ \\ \hline
$z =51+9i$ & $p=47+8i$ & $q=4+i$ & $z=p+q$ \\ \hline
$z =51+25i$ & $p=48+23i$ & $q=3+2i$ & $z=p+q$ \\ \hline
$z= 51+31i$ & $p=49+30i$ & $q=2+i$ & $z=p+q$ \\ \hline
$z =51+49i$ & $p=48+47i$ & $q=3+2i$ & $z=p+q$ \\ \hline
\multicolumn{4}{}{} \\ \hline
$z =  6+6i$ & $p=5+4i$ & $q=2-i$ & $z=p+iq$ \\ \hline
$z =  7+7i$ & $p=5+4i$ & $q=3-2i$ & $z=p+iq$ \\ \hline
$z =  8+8i$ & $p= 7+2i$ & $q=6-i$ & $z=p+iq$ \\ \hline
$z =  9+9i$ & $p=5+4i$ & $q=5-4i$ & $z=p+iq$ \\ \hline
$z =10+10i$ & $p=6+5i$ & $q=5-4i$ & $z=p+iq$ \\ \hline
$z =11+11i$ & $p=6+5i$ & $q=6-5i$ & $z=p+iq$ \\ \hline
$z=12+12i$ & $p=11+6i$ & $q=6-i$ & $z=p+iq$ \\ \hline
$z=16+16i$ & $p=14+i$ & $q=15-2i$ & $z=p+iq$ \\ \hline
$z=19+19i$ & $p=18+17i$ & $q=2-i$ & $z=p+iq$ \\ \hline
$z=27+27i$ & $p=25+24i$ & $q=3-2i$ & $z=p+iq$ \\ \hline
$z=39+39i$ & $p=35+34i$ & $q=5-4i$ & $z=p+iq$ \\ \hline
$z=48+48i$ & $p=46+41i$ & $p=7-2i$ & $z=p+iq$ \\ \hline
$z=49+49i$ & $p=25+24i$ & $q=25-24i$ & $z=p+iq$ \\ \hline
$z=50+50i$ & $p=48+47i$ & $q=3-2i$ & $z=p+iq$ \\ \hline
$z=51+51i$ & $p=26+25i$ & $q=26-25i$ & $z=p+iq$ \\ \hline
\end{longtable}

\end{document}